\numberwithin{equation}{section}
\numberwithin{figure}{section}
\theoremstyle{plain}
\newtheorem{thm}{\protect\theoremname}[section]
  \theoremstyle{plain}
  \theoremstyle{plain}
  \theoremstyle{plain}
  \newtheorem{rem}[thm]{\protect\remarkname}
  \theoremstyle{plain}
  \newtheorem{exa}[thm]{\protect\examplename}
  \theoremstyle{plain}
  \newtheorem{lem}[thm]{\protect\lemmaname}
  \theoremstyle{definition}
  \providecommand{\corollaryname}{Corollary}
  \providecommand{\definitionname}{Definition}
  \providecommand{\lemmaname}{Lemma}
  \providecommand{\propositionname}{Proposition}
  \providecommand{\examplename}{Example}
  \providecommand{\remarkname}{Remark}
\providecommand{\theoremname}{Theorem}
\DeclareMathOperator{\dist}{dist}
\DeclareMathOperator{\diam}{diam}
\DeclareMathOperator{\ess}{ess}
\DeclareMathOperator{\loc}{loc}
\begin{document}

\title{Sobolev homeomorphisms and Brennan's conjecture}

\author{V.~Gol'dshtein and A.~Ukhlov}

\vspace{1cm}

{\it Dedicated to the blessed memory of F.~W.~Gehring}

\vspace{1cm}

\begin{abstract}
Let $\Omega \subset \mathbb{R}^n$ be a domain that supports the $p$-Poincar\'e inequality. Given a homeomorphism $\varphi \in L^1_p(\Omega)$, for $p>n$  we show the domain $\varphi(\Omega)$ has finite geodesic diameter. This result has a direct application to Brennan's conjecture and quasiconformal homeomorphisms. {\bf The Inverse Brennan's conjecture} states that for any simply connected plane domain $\Omega' \subset\mathbb C$ with nonempty boundary and for any conformal homeomorphism $\varphi$ from the unit disc $\mathbb{D}$  onto $\Omega'$  the complex derivative $\varphi'$ is integrable in the degree $s$, $-2<s<2/3$. If $\Omega'$ is bounded than $-2<s\leq 2$.  We prove that integrability in the degree $s> 2$ is not possible for domains $\Omega'$ with infinite geodesic diameter. \end{abstract}

\maketitle

{\bf Key words and phrases:} Sobolev homeomorphisms, Brennan's conjecture.

\section{Introduction }

Let $\Omega, \Omega'$ be domains in the Euclidean space $\mathbb{R}^n$, $n\geq 2$, and $\varphi:\Omega \to \Omega'$ be a homeomorphism that belongs to the Sobolev class $L^1_p(\Omega)$. 
Let us remark immediately that for all functions $f$ belong to an equivalence class $[f]\in L^1_p(\Omega)\subset L_{1,\loc}(\Omega)$, $p>n$, we have a unique redefined continuous function $\tilde{f}\in L^1_p(\Omega)$ (see, for example \cite{HH, Z}). This redefined function we take as an element of Sobolev space $L^1_p(\Omega)$, $p>n$.

The main result of this work is the following statement:
\begin{thm} \label{main}
Let $\Omega$ and $\Omega'$ be domains in $\mathbb{R}^n$, and  $\Omega$ supports Poincar\'e $p$-inequality for some $p>n$. Suppose that there exists a Sobolev homeomorphism $\varphi : \Omega\to \Omega'$ of the class $L^1_p(\Omega)$ . Then the domain $\Omega'$ has a finite geodesic diameter.
\end{thm}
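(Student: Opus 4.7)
The plan is to turn the hypotheses $p>n$ and $p$-Poincar\'e on $\Omega$ into quantitative oscillation bounds for $\varphi$ on small balls, and then to chain these bounds along a curve to control the intrinsic length of its image in $\Omega'$. The first ingredient is the Morrey-type estimate: for any ball $B=B(x,r)$ with $2B\subset\Omega$, the local $(1,p)$-Poincar\'e inequality combined with Morrey's lemma (valid because $p>n$) gives
$$\operatorname{osc}(\varphi,B)\leq C\, r^{1-n/p}\,\|\nabla\varphi\|_{L^{p}(2B)},$$
with $C$ depending only on $n$, $p$, and the Poincar\'e constant. A standard averaging over line segments in $B$ moreover shows that for every $q,q'\in B$ there is a rectifiable curve in $\varphi(B)$ from $\varphi(q)$ to $\varphi(q')$ whose length is no larger than the same quantity $Cr^{1-n/p}\|\nabla\varphi\|_{L^{p}(2B)}$.

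Given arbitrary $y_{1},y_{2}\in\Omega'$, I would set $x_{i}=\varphi^{-1}(y_{i})$ and invoke the chain-condition structure forced by the Poincar\'e assumption to select a Boman (or John) chain of balls $B_{0},B_{1},\dots,B_{N}\subset\Omega$ with $x_{1}\in B_{0}$, $x_{2}\in B_{N}$, consecutive balls overlapping, the dilates $\{2B_{k}\}$ of bounded overlap, and radii $r_{k}$ growing and then decaying geometrically toward a central ball of size comparable to the inradius of $\Omega$. Concatenating the in-ball curves from the previous paragraph gives a path in $\Omega'$ joining $y_{1}$ to $y_{2}$ of length at most $C\sum_{k}r_{k}^{1-n/p}\|\nabla\varphi\|_{L^{p}(2B_{k})}$. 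H\"older's inequality with conjugate exponent $p/(p-1)$ then bounds this by
$$C\Bigl(\sum_{k}r_{k}^{(p-n)/(p-1)}\Bigr)^{(p-1)/p}\Bigl(\sum_{k}\int_{2B_{k}}|\nabla\varphi|^{p}\,dx\Bigr)^{1/p}.$$
The bounded overlap of $\{2B_{k}\}$ turns the second factor into a constant multiple of $\|\nabla\varphi\|_{L^{p}(\Omega)}$, and the geometric structure of the radii turns the first factor into a convergent geometric series bounded in terms of $\diam(\Omega)$, $n$, and $p$ alone. Since the resulting bound on $d_{\Omega'}(y_{1},y_{2})$ is independent of $y_{1},y_{2}$, the geodesic diameter of $\Omega'$ is finite.

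The main technical obstacle is the construction of the ball chain with geometrically controlled radii purely from the hypothesis that $\Omega$ supports the $p$-Poincar\'e inequality. This is where the geometric theory of Poincar\'e domains (John/Boman chain conditions, quasiconvexity) must be invoked, rather than the Sobolev-Morrey theory itself; once such a chain is in place, the oscillation-plus-H\"older bookkeeping above produces the desired uniform bound on $d_{\Omega'}$ mechanically.
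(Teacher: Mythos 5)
Your local ingredient is essentially correct: for $p>n$ the Riesz-potential/Morrey argument (via the ACL property and averaging over two-segment polygonal paths inside a convex ball) does produce, for any $q,q'\in B$ with $B\subset\Omega$, a rectifiable curve in $\varphi(B)$ joining $\varphi(q)$ to $\varphi(q')$ of length at most $Cr^{1-n/p}\|\nabla\varphi\|_{L^p(B)}$, and your H\"older bookkeeping with exponent $(p-n)/(p-1)$ is arithmetically consistent. The genuine gap is exactly the step you flag and then defer: the existence, for every pair of points of $\Omega$, of a Boman/John chain with geometrically controlled radii and dilates of bounded overlap. This does \emph{not} follow from the hypothesis as stated. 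The paper assumes only the functional inequality $\inf_{c\in\mathbb R}\|f-c\mid C(\Omega)\|\leq K\|f\mid L^1_p(\Omega)\|$, and domains supporting it for large $p$ need not be John domains: an outward-cusp domain such as $\{(x,y):0<x<1,\ 0<y<x^2\}$ supports this embedding for $p$ large enough, yet admits no carrot with width comparable to arclength near the cusp tip, so no chain with geometrically decaying radii exists there. The Buckley--Koskela ``Poincar\'e implies John'' results require extra hypotheses (simple connectivity in the plane, or quasiconformal equivalence to a uniform domain) and concern a different inequality. To complete your route you would need the finer characterization of $(\infty,p)$-Poincar\'e domains by a weak cigar condition of the form $\inf_\gamma\int_\gamma\dist(z,\partial\Omega)^{(1-n)/(p-1)}\,ds\leq C$, from which chains satisfying $\sum_k r_k^{(p-n)/(p-1)}\leq C$ could be extracted; none of this machinery is supplied, and it is the entire content of the problem in your approach.

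The paper's proof avoids all geometric decomposition of $\Omega$ and you should compare it against yours: it tests the Poincar\'e inequality on a single well-chosen function. Set $f(y)=\dist_{\Omega'}(x_0,y)$, which belongs to $L^1_\infty(\Omega')$ with seminorm at most $1$; the composition $f\circ\varphi$ belongs to $L^1_p(\Omega)$ because the composition operator $\varphi^*:L^1_\infty(\Omega')\to L^1_p(\Omega)$ is bounded (a chain-rule estimate $|\nabla(f\circ\varphi)|\leq |D\varphi|$ a.e., quoted from the authors' earlier work); the $p$-Poincar\'e inequality in $\Omega$ then forces $f\circ\varphi$, hence $f$, to be a bounded function, and boundedness of $f$ is precisely finiteness of the geodesic diameter of $\Omega'$. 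This uses the Poincar\'e hypothesis verbatim as a black box, with no Morrey estimate and no chain, which is why it succeeds where your argument stalls.
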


Recall {\bf the Inverse Brennan's conjecture} for conformal homeomorphisms. Let $\Omega\subset\mathbb C$ be a simply connected plane domain with nonempty boundary, $\mathbb{D}$ be the unit disc and $\varphi: \mathbb D\to\Omega$ be a conformal homeomorphism. The Inverse Brennan's conjecture \cite{Br} states that  
$$
\iint\limits_{\mathbb D}|\varphi'(z)|^s~dxdy<+\infty,\quad \text{for all}\quad -2<s<\frac{2}{3}.
$$
The example $\Omega = \mathbb C \setminus (-\infty,-1/4]$
shows that this range of $s$ cannot be extended.

\vspace{0.3cm}

The first observation about an extended range of integrability for conformal homeomorphisms belongs to F.~W.~Ghering (was not published) \cite{Br}.

\vspace{0.3cm}

Recall one of classical definitions of conformal homeomorphisms. A homeomorphism $\varphi: \Omega\to\Omega'$, $\Omega$ and $\Omega'$ domains in $\mathbb C$, is called conformal if $\varphi$ is a diffeomorphism and 
$$
|\varphi'(z)|^2=J(z,\varphi) \,\,\,\text{for all}\,\,\,z\in\Omega.
$$

In the general form the Inverse Brennan's conjecture is proved for the range $-1.78<p<2/3$, see, for example, \cite{Ber}.
The Brennan's conjecture has applications to weighted Sobolev embedding theorems and solvability of elliptic equations \cite{GU1, GU2, GU3}. 
In these works was proved that sharpness of embeddings depends on the range of $s$ in the Inverse Brennan's conjecture.
On this way the problem to describe classes of domains, for which this range of integrability can be extended arises.

We start from the class of domains with bounded measure (area).
If the Lebesgue measure (area) $\mu_2(\Omega)$ of the domain $\Omega$ is finite, then, by simple calculations we obtain
$$
\iint\limits_{\mathbb D}|\varphi'(z)|^2~dxdy=\iint\limits_{\mathbb D}J(z,\varphi)~dxdy=\iint\limits_{\Omega}~dudv=m_2(\Omega)<\infty.
$$
Hence, for domains with finite measure the conjecture holds for $s\in [s_0,2]$. Here $s_0$ is the best lower bound for which Inverse Brennan's conjecture is proved. 

We show that this upper bound $s=2$ can not be extended without additional assumptions on $\Omega$:

\begin{thm}
\label{thm:Base} Let for a conformal homeomorphism $\varphi:\mathbb D\to\Omega$ the complex derivative $\varphi'$ is integrable in the degree $s>2$. Then the domain $\Omega$ has finite geodesic diameter.
\end{thm}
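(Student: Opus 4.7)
The plan is to derive this statement directly from Theorem \ref{main} by verifying its hypotheses in the conformal two-dimensional setting. Here $n=2$, the source domain is $\mathbb{D}$, and the target is $\Omega = \varphi(\mathbb{D})$. Since $\mathbb{D}$ is a smooth (in fact convex, Lipschitz) bounded domain, it is well-known to support the Poincar\'e $p$-inequality for every $p\geq 1$; in particular for any $p>2$. So the only non-trivial task is to promote the integrability hypothesis $|\varphi'|\in L^s(\mathbb{D})$ for some $s>2$ into membership in the Sobolev class $L^1_p(\mathbb{D})$ for some $p>n=2$.

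The key identification is between the complex derivative and the operator norm of the real differential. For a conformal homeomorphism $\varphi:\mathbb{D}\to\Omega$, the differential $D\varphi(z)$ acts on $\mathbb{R}^2$ as multiplication by the complex number $\varphi'(z)$; hence $|D\varphi(z)|=|\varphi'(z)|$ pointwise, and $J(z,\varphi)=|\varphi'(z)|^2$. Consequently the hypothesis $\iint_{\mathbb D}|\varphi'(z)|^s\,dxdy<\infty$ is exactly $|\nabla\varphi|\in L^s(\mathbb{D})$, which means $\varphi\in L^1_s(\mathbb{D})$.

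With $p:=s>2=n$ I would then invoke Theorem \ref{main} directly: $\mathbb{D}$ supports the Poincar\'e $p$-inequality, $\varphi:\mathbb{D}\to\Omega$ is a Sobolev homeomorphism of class $L^1_p(\mathbb{D})$, hence the image $\Omega=\varphi(\mathbb{D})$ has finite geodesic diameter. This completes the proof.

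There is essentially no obstacle: the entire content is already packed into Theorem \ref{main}. The only point that deserves a line of care is that the Sobolev derivative of a conformal map coincides a.e.\ with its classical complex derivative (so that $|\nabla\varphi|=|\varphi'|$ is unambiguous and the continuous representative discussed in the introduction is the map $\varphi$ itself). Once that is noted, the rest is a verification of hypotheses.
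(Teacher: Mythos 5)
Your proposal is correct and follows exactly the route the paper intends: the paper itself dispatches this theorem with the single remark that it is ``a direct consequence of the theorem \ref{main}'', and your verification of the hypotheses (the disc supports the $p$-Poincar\'e inequality for all $p>n=2$, and $|D\varphi|=|\varphi'|$ for a conformal map, so $|\varphi'|\in L^s(\mathbb D)$ with $s>2$ means $\varphi\in L^1_s(\mathbb D)$) is precisely the omitted bookkeeping. Your write-up is in fact more explicit than the paper's.
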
 

This theorem is a direct consequence of the theorem \ref{main} and permit us to formulate the Inverse Brennan's conjecture for domains with finite measure (area). 

{\bf Inverse Brennan's conjecture for domains with finite measure}. This conjecture concerns integrability of derivatives of plane conformal homeomorphisms $\varphi:\mathbb D \to  \Omega$ of the unit disc $\mathbb D\subset\mathbb R^2$ to a simply connected plane domain with finite measure (area). 

The conjecture
\begin{equation}
\iint\limits_{\mathbb D}|\varphi'(z)|^s~dxdy<+\infty,\quad \text{for all}\quad -2<s \leq 2.
\end{equation}

The example $\Omega=\mathbb D \setminus (-1,-1/4]$
shows that the lower limit $-2$ of $s$ cannot be extended. From the previous theorem follows that $s\leq 2$ for any 
simply connected plane domain with finite measure (area) and infinite geodesic diameter. 

\vskip 0.2cm
{\bf Open problems:} {\bf 1.} Does estimate $-1.78<s$ can be improved for 
simply connected plane domain with finite measure (area)?

{\bf 2.} Does estimates $s \leq 2$  can be improved for for 
simply connected plane domain with finite measure (area) and finite geodesic diameter or
under some other reasonable geometric condition?
\vskip 0.2cm

The proposed method is based on the composition operators theory (see, for example, \cite{GR, U1, VU1}) which allows to "transfer" the Poincar\'e (Sobolev) inequalities from one domain to another \cite{GG,GU01, GU3}. 

The following diagram illustrate this idea:

\[\begin{array}{rcl}
L^1_{\infty}(\Omega') & \stackrel{\varphi^*}{\longrightarrow} & L^1_p(\Omega) \\[2mm]
\multicolumn{1}{c}{\downarrow} & & \multicolumn{1}{c}{\downarrow} \\[1mm]
C(\Omega') & \stackrel{(\varphi^{-1})^*}{\longleftarrow} & C(\Omega)
\end{array}\]

Here the operator $\varphi^{\ast}$ defined by the composition rule $\varphi^{\ast}(f)=f\circ\varphi$ is a bounded composition operator of Sobolev spaces induced by a homeomorphism $\varphi$ of domains $\Omega$ and $\Omega'$.

An another domain of applications of the theorem \ref{main} is the integrability problem for derivatives of $n$-dimensional quasiconformal homeomorphisms. This problem was extensively studied (see, for example, in \cite{AK, G1, MV}).

We show that the similar to conformal case result about global integrability of derivatives of $n$-dimensional quasiconformal homeomorphisms is also a direct consequence of the theorem \ref{main}:

\begin{thm}
\label{thm:BaseQC} Let for a quasiconformal mapping $\varphi:\mathbb B\to\Omega$, $\Omega\subset\mathbb R^n$, $\mathbb B\subset\mathbb R^n$ is the unit ball, the derivative $D\varphi$ is integrable in the degree $s>n$. Then the domain $\Omega$ has finite geodesic diameter.
\end{thm}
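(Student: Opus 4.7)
The plan is to derive Theorem~\ref{thm:BaseQC} as an immediate corollary of Theorem~\ref{main}, with only a brief verification of the hypotheses.

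First, I would observe that the unit ball $\mathbb{B} \subset \mathbb{R}^n$ supports the $p$-Poincar\'e inequality for every $p \geq 1$. This is classical: $\mathbb{B}$ is a bounded convex (in particular Lipschitz, in particular John) domain, and the Poincar\'e inequality on such domains is standard. Thus the structural hypothesis of Theorem~\ref{main} holds for $\Omega := \mathbb{B}$ and any choice of $p > n$.

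Second, I would argue that the integrability assumption $|D\varphi| \in L^s(\mathbb{B})$ with $s>n$ places $\varphi$ in the Sobolev class $L^1_s(\mathbb{B})$. This uses the classical regularity theory for quasiconformal mappings: any quasiconformal $\varphi : \mathbb{B} \to \Omega$ is ACL, differentiable almost everywhere, and belongs to $W^{1,n}_{\loc}(\mathbb{B})$, with weak derivatives coinciding with the a.e.\ classical derivatives. Therefore the condition $\int_{\mathbb{B}} |D\varphi|^s\,dx < \infty$ is exactly the statement that $\varphi \in L^1_s(\mathbb{B})$ as a Sobolev homeomorphism.

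Third, I would apply Theorem~\ref{main} with the pair $(\Omega,\Omega') = (\mathbb{B}, \varphi(\mathbb{B})) = (\mathbb{B}, \Omega)$ and with exponent $p := s > n$. The theorem immediately yields that $\Omega$ has finite geodesic diameter, which is the conclusion sought.

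I do not expect a genuine obstacle here: the only nontrivial input beyond Theorem~\ref{main} is the well-known fact that quasiconformal mappings have Sobolev regularity consistent with their distortion and with any a priori integrability bound on $|D\varphi|$. The proof is therefore essentially a one-line invocation of Theorem~\ref{main}, once the Sobolev membership and the Poincar\'e property of $\mathbb{B}$ are noted.
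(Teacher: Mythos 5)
Your proposal is correct and follows exactly the route the paper intends: the paper itself presents Theorem~\ref{thm:BaseQC} as a ``direct consequence'' of Theorem~\ref{main}, and your verification that $\mathbb B$ supports the $p$-Poincar\'e inequality for $p=s>n$ (it is a bounded domain with smooth boundary, which the paper explicitly notes suffices) and that $|D\varphi|\in L^s(\mathbb B)$ places the quasiconformal map in $L^1_s(\mathbb B)$ supplies precisely the routine details the paper leaves implicit. The only cosmetic caveat is that the paper's $p$-Poincar\'e inequality is a sup-norm (Morrey-type) inequality defined only for $p>n$, so your remark that it holds for all $p\geq 1$ should be read as referring to the range actually used, namely $p=s>n$.
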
 

\section{The Poincar\'e type inequality}

Let $\Omega\subset\mathbb{R}^n$ be a $n$-dimensional Euclidean domain.
As usually Lebesgue space $L_p(\Omega)$, $1\leq p\leq\infty$, is the space of locally summable functions with the finite norm:
$$
\| f\mid L_p(\Omega)\|=\biggr(\int\limits_\Omega |f(x)|^p~dx\biggl)^{\frac{1}{p}},\quad 1\leq p<\infty,
$$
and
$$
\|f\mid L_{\infty}(\Omega)\|=
\ess\sup\limits_{x\in \Omega}|f(x)|,\quad p=\infty.
$$

The space of bounded continuous on $\Omega$ functions $C(\Omega)$ considered with the norm:
$$
\|f\mid C(\Omega)\|=
\sup\limits_{x\in \Omega}|f(x)|.
$$

The space $C_0(\Omega)\subset C(\Omega)$ is a linear subspace of continuous functions with compact support in $\Omega$.

Here we reproduce a standard definition of seminormed Sobolev spaces $L^1_p(\Omega)$.
 
The seminormed Sobolev space
$L^1_p(\Omega)$, $1\leq p\leq\infty$, consists of locally summable, weakly differentiable functions $f:\Omega\to\mathbb R$ with the finite seminorm:
$$
\|f\mid L^1_p(\Omega)\|=\| |\nabla f|\mid L_p(\Omega)\|.
$$

Note that every element $[f]\in L^1_p(\Omega)\subset L_{1,\loc}(\Omega)$ is an equivalence class 
of locally integrable functions that coincide a.~e.~in $\Omega$. In the case $p>n$ the Lebesgue redefinition $\tilde{f}$ of a function $f\in [f]$:
$$
\tilde{f}(x)=\lim\limits_{r\to 0}\frac{1}{B(x,r)}\int\limits_{B(x,r)}f(y)~dy,\,\,\, f\in [f],
$$
is defined at every point $x\in \Omega$ (see, for example \cite{HH, Z}) and by the Sobolev embedding theorem is a continuous function. Hence for all functions $f\in [f]$ we have a unique redefined continuous function $\tilde{f}\in L^1_p(\Omega)$ and we take this function as an element of Sobolev space $L^1_p(\Omega)$ for $p>n$.

A mapping $\varphi:\Omega\to\mathbb{R}^n$ belongs to $L^1_p(\Omega)$, $1\leq p\leq\infty$, if
its coordinate functions belong to $L^1_p(\Omega)$.
In this case the formal Jacobi matrix $D\varphi (x)$
and its determinant (Jacobian) $J(x,\varphi)$ are well defined at
almost all points $x\in \Omega$. The norm $|D\varphi (x)|$ of the matrix
$D\varphi (x)$ is the norm of the corresponding linear operator. We will use the same notation for this matrix and the corresponding linear operator.

We say that a domain $\Omega$ supports  $p$-Poincar\'e in\-equa\-li\-ty for $p>n$ if
any function $f\in L^1_p(\Omega)$ belongs to $C(\Omega)$ and the following inequality 
$$
\inf_{c\in \mathbb R} \|f-c\mid C(\Omega)\|\leq K \|f\mid L^1_{p}(\Omega)\|
$$
holds. Here the constant $K$ does not depends on $f$. 

We study existence conditions for Sobolev homeomorphisms of a domain $\Omega$ that supports $p$-Poincar\'e inequality for $p>n$ onto a domain $\Omega'$ in terms of existence of $p$-Poincar\'e inequalities in the domain $\Omega'$.

For example, every bounded domain with a smooth boundary supports  the $p$-Poincar\'e for any  $p>n$.

\begin{thm}
\label{thm:Poincare} 
Let $\Omega$ and $\Omega'$ be domains in $\mathbb{R}^n$ and the domain $\Omega$ supports $p$-Poincar\'e inequality for $p>n$. Suppose that there exists a homeomorphism $\varphi : \Omega\to \Omega'$ that belongs to $L^1_p(\Omega)$. Then every function $f\in L^1_{\infty}(\Omega')$  belongs to  $C(\Omega')$ and the following inequality
\begin{equation}\label{PoIn}
\inf_{c\in \mathbb R} \|f-c\mid C(\Omega')\|\leq K \|f\mid L^1_{\infty}(\Omega')\|
\end{equation}
holds. Here constant $K$ does not depends on $f$.
\end{thm}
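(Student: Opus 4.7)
My plan is to run the diagram displayed in the introduction: pull a candidate function $f \in L^1_\infty(\Omega')$ back to $\Omega$ via the composition operator $\varphi^{*}$, invoke the assumed $p$-Poincar\'e inequality on $\Omega$, and transport the resulting sup-norm estimate back to $\Omega'$ using that $\varphi$ is a bijection.

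First I would fix representatives. Since $p>n$, the Sobolev homeomorphism $\varphi$ has a unique continuous representative by the Sobolev embedding recalled at the start of the paper; I identify $\varphi$ with it. A function $f \in L^1_\infty(\Omega')$ has essentially bounded weak gradient and hence admits a locally Lipschitz representative whose local Lipschitz constant is controlled by $\|f\mid L^1_\infty(\Omega')\|$; I identify $f$ with this representative. Then $f\circ\varphi\colon\Omega\to\mathbb R$ is an unambiguously defined continuous function. The standard chain rule for a locally Lipschitz function composed with a Sobolev mapping yields $f\circ\varphi\in L^1_p(\Omega)$ together with the a.e.\ inequality
\[
|\nabla(f\circ\varphi)(x)| \leq |\nabla f(\varphi(x))|\,|D\varphi(x)| \leq \|f\mid L^1_\infty(\Omega')\|\cdot|D\varphi(x)|.
\]
Raising to the $p$-th power and integrating over $\Omega$ gives
\[
\|f\circ\varphi\mid L^1_p(\Omega)\| \leq \|f\mid L^1_\infty(\Omega')\|\cdot\|\varphi\mid L^1_p(\Omega)\|.
\]

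Next I would apply the hypothesis to $g:=f\circ\varphi$. Since $\Omega$ supports the $p$-Poincar\'e inequality for $p>n$, $g$ has a continuous representative on $\Omega$ (in fact the one just constructed) and there is a constant $K_0$, depending only on $\Omega$ and $p$, such that
\[
\inf_{c\in\mathbb R}\|g-c\mid C(\Omega)\| \leq K_0\,\|g\mid L^1_p(\Omega)\|.
\]
Because $\varphi\colon\Omega\to\Omega'$ is a bijection, one has $f=g\circ\varphi^{-1}$ on $\Omega'$, so $f\in C(\Omega')$ and $\|f-c\mid C(\Omega')\|=\|g-c\mid C(\Omega)\|$ for every $c\in\mathbb R$. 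Combining the three displayed inequalities produces (\ref{PoIn}) with $K=K_0\,\|\varphi\mid L^1_p(\Omega)\|$.

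The main obstacle is the chain-rule step: one has to verify that precomposition with the Sobolev homeomorphism $\varphi$ sends $L^1_\infty(\Omega')$ boundedly into $L^1_p(\Omega)$ with the expected pointwise control on the gradient of the composition. This is precisely the kind of statement supplied by the composition operator theory referenced in the introduction \cite{GR, U1, VU1}; for the present case the two inputs that are needed --- local Lipschitz regularity of the outer function $f$, and the existence of a continuous representative of the inner map $\varphi$ --- are automatic from $|\nabla f|\in L_\infty$ and $\varphi\in L^1_p$ with $p>n$. Once that input is in place, the rest of the argument amounts to straightforward bookkeeping along the diagram $L^1_\infty(\Omega')\to L^1_p(\Omega)\to C(\Omega)\to C(\Omega')$.
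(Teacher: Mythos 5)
Your proposal is correct and follows essentially the same route as the paper: pull $f$ back to $\Omega$ via $\varphi^{*}$, apply the $p$-Poincar\'e inequality there, and transfer the sup-norm estimate back through the homeomorphism. The only difference is that you sketch the chain-rule proof of the bound $\|\varphi^{*}(f)\mid L^1_{p}(\Omega)\|\leq \|f\mid L^1_{\infty}(\Omega')\|\cdot\|\varphi\mid L^1_p(\Omega)\|$, whereas the paper simply cites \cite{GU4,GU5} for the boundedness of the composition operator.
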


\begin{proof} 
Let $f \in L^1_{\infty}(\Omega')$. Then by \cite{GU4,GU5} the composition operator 
$$
\varphi^{\ast}: L^1_{\infty}(\Omega')\to L^1_p(\Omega),\quad \varphi^{\ast}(f)=f\circ\varphi,
$$
is bounded and the inequality
$$
\|\varphi^{\ast}(f)\mid L^1_{p}(\Omega)\|\leq A\|f\mid L^1_{\infty}(\Omega')\|
$$
holds for any function  $f \in L^1_{\infty}(\Omega')$.

Hence the function $\varphi^{\ast}(f)$ belongs to the Sobolev space $L^1_p(\Omega)$. Since the domain $\Omega$ supports  the $p$-Poincar\'e inequality  we have the following inequality 
$$
\inf_{c\in \mathbb R} \|\varphi^{\ast}(f)-c\mid C(\Omega)\|\leq M \|\varphi^{\ast}(f)\mid L^1_{p}(\Omega)\|
$$
for any $f \in L^1_{\infty}(\Omega')$. Here the constant $M$ does not depends on $f$.

Because $\varphi^{-1}$ is a homeomorphism   it induces an isometry of spaces $C(\Omega)$ and $C(\Omega')$.  Then $f=(\varphi^{-1})^{\ast}(\varphi^{\ast}(f)) \in C(\Omega')$  and
$$
\|f-c\mid C(\Omega')\|=\|\varphi^{\ast}(f)-c\mid C(\Omega)\|
$$
for every $c\in\mathbb R$.

Because
$$
\varphi^{\ast}: L^1_{\infty}(\Omega')\to L^1_p(\Omega)
$$ is a bounded composition operator we have finally
\begin{multline}
\inf_{c\in \mathbb R} \|f-c\mid C(\Omega')\|=\inf_{c\in \mathbb R}\|\varphi^{\ast}(f)-c\mid C(\Omega)\|\\
\leq M \|\varphi^{\ast}(f)\mid L^1_{p}(\Omega)\|\leq AM\|f\mid L^1_{\infty}(\Omega')\|=K\|f\mid L^1_{\infty}(\Omega')\|.
\nonumber
\end{multline}
\end{proof}
\vskip 0.3cm

Let us illustrate this theorem with help of a simple plane example. We identify the Euclidean space $\mathbb R^2$ and the complex plane $\mathbb C$ putting $z=x+iy\in\mathbb C$ where $(x,y)\in\mathbb R^2$.

\begin{exa}
Let $\Omega_s=\{z=x+iy\in \mathbb C : 1< x<\infty,0< y< 1\}$ be the plane domain and $\mathbb D\subset\mathbb C$ be the unit disc. Then does not exists a Sobolev homeomorphism $\varphi: \mathbb D\to\Omega_s$ of the class $L^1_p(\mathbb D)$ for any  $p>2$.
\end{exa}

\begin{proof} 
Consider the function $f(x,y)=\frac{1}{\sqrt{2}}(x+y)$, defined on the domain $\Omega$. Then $|\nabla f(x,y)|=1$ for all $z=x+iy\in\Omega$ and 
$\|\nabla f\mid L_{\infty}(\Omega)\|=1$. From the other side, $f(x,y)\rightarrow \infty$, as $x\rightarrow\infty$. Hence for the function $f(x,y)$ the inequality (\ref{PoIn})  doesn't holds and by Theorem \ref{thm:Poincare}  for any $p>2$ a homeomorphism $\varphi: \mathbb D\to\Omega$  of the class $L^1_p(\mathbb D)$ can not exists.
\end{proof} 

In the case $p=2$, $n=2$ we have following simple necessary condition for existence of a Sobolev homeomorphisms $\varphi:\mathbb D\to\Omega$ of the class 
$L^1_2(\mathbb D)$.

\vskip 0.3cm

\begin{lem} 
Let a homeomorphism $\varphi : \Omega\to \Omega'$ belongs to $L^1_2(\Omega)$, $\Omega,\Omega'\subset\mathbb C$. Then the domain $\Omega'$ has finite measure.
\end{lem}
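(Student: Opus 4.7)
The plan is to chain the three-step estimate
$$m_2(\Omega') = m_2(\varphi(\Omega)) \leq \int_\Omega |J(z,\varphi)|\,dz \leq \tfrac{1}{2}\int_\Omega |D\varphi(z)|^2\,dz = \tfrac{1}{2}\|\varphi \mid L^1_2(\Omega)\|^2 < \infty.$$
The outer equalities are respectively the homeomorphism property and the definition of the $L^1_2$-seminorm, so the real content lies in the two inequalities. The rightmost one is a pointwise algebraic fact. Writing $\varphi=(u,v)$ with $u,v\in L^1_2(\Omega)$, the Jacobian $J(z,\varphi)=u_xv_y-u_yv_x$ is defined at almost every $z$, and a two-fold application of the arithmetic--geometric-mean inequality (Hadamard's inequality for a $2\times 2$ matrix) gives
$$|J(z,\varphi)| \leq |\nabla u(z)|\,|\nabla v(z)| \leq \tfrac{1}{2}\bigl(|\nabla u(z)|^2 + |\nabla v(z)|^2\bigr) = \tfrac{1}{2}|D\varphi(z)|^2$$
almost everywhere in $\Omega$, so $J(\cdot,\varphi)\in L^1(\Omega)$ with the claimed bound.

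The substantive step is the middle inequality $m_2(\varphi(\Omega)) \leq \int_\Omega |J(z,\varphi)|\,dz$, i.e.\ the change-of-variables formula for the Sobolev homeomorphism $\varphi$. This is equivalent to Lusin's $(N)$-condition for $\varphi$: $m_2(\varphi(E)) = 0$ whenever $m_2(E) = 0$. At the critical exponent $p=n$ the $(N)$-condition can fail in general (and even for homeomorphisms in $W^{1,n}_{\mathrm{loc}}$ when $n\geq 3$), so the argument genuinely exploits planarity: for $n=2$ every homeomorphism of the Sobolev class $W^{1,2}_{\mathrm{loc}}$ does enjoy Lusin $(N)$, after which the area formula together with injectivity (preimage count equal to one) upgrades the right-hand integral to exactly $m_2(\varphi(\Omega))$.

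The main obstacle is thus the $(N)$-property itself. My plan is either to invoke the sharp planar result for $W^{1,2}_{\mathrm{loc}}$ homeomorphisms, or, more self-containedly, to truncate $\varphi$ on level sets $\{|D\varphi|\leq k\}$ to obtain Lipschitz approximants, apply the classical Lipschitz area formula on each, and pass to the limit $k\to\infty$ by monotone convergence, using the $L^2$-integrability of $|D\varphi|$ to control the complement. Once the middle inequality is secured, concatenation with the pointwise Hadamard bound and the $L^1_2$-hypothesis gives $m_2(\Omega') < \infty$, as required.
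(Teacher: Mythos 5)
Your argument follows essentially the same route as the paper: reduce to $m_2(\Omega')\leq \int_\Omega |J(z,\varphi)|\,dxdy$ via the change-of-variables formula for Sobolev homeomorphisms, then bound the Jacobian integral by the square of the $L^1_2$-seminorm. The only cosmetic difference in the second step is that you use the pointwise Hadamard/AM--GM bound $|J|\leq \frac12|D\varphi|^2$ (with the Hilbert--Schmidt norm), while the paper splits $|J|\leq |u_xv_y|+|u_yv_x|$ and applies H\"older at the integral level, landing on the constant $2$ instead of $\frac12$; both are fine. Where you genuinely add value is in the first step: the paper simply cites Gol'dshtein--Reshetnyak for the identity $m_2(\Omega')=\int_\Omega|J|$, whereas you correctly isolate the Lusin $(N)$-condition as the actual content of that step at the critical exponent $p=n=2$. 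One caution about your ``self-contained'' fallback: the Lipschitz truncation on the sets $\{|D\varphi|\leq k\}$ plus monotone convergence only yields the area inequality on $\Omega\setminus Z$ for a null set $Z$ (the points where $\varphi$ fails to be approximately differentiable or where the truncation never catches it), and the quantity you must still control is $m_2(\varphi(Z))$ --- which is precisely the $(N)$-property again, and is not controlled by $L^2$-integrability of $|D\varphi|$ alone. So the truncation route does not actually bypass $(N)$; you should rest the argument on the first of your two options, the theorem that every planar homeomorphism of class $W^{1,2}_{\loc}$ satisfies condition $(N)$ (equivalently, on the cited change-of-variables formula for Sobolev homeomorphisms, as the paper does). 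With that reference in place your proof is complete and, arguably, more carefully justified than the paper's.
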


\begin{proof}
By change of variable formula for Sobolev mappings (see, for example, \cite{GR}) and H\"older inequality we have
\begin{multline}
m_2(\Omega')=\\=\iint\limits_{\Omega} |J(z,\varphi)|~dxdy=
\iint\limits_{\Omega} |u_xv_y-u_yv_x|~dxdy
\leq\iint\limits_{\Omega} |u_xv_y|~dxdy+\iint\limits_{\Omega} |u_yv_x|~dxdy\\
\leq\left(\iint\limits_{\Omega} |u_x|^2~dxdy\right)^{\frac{1}{2}}\left(\iint\limits_{\Omega} |v_y|^2~dxdy\right)^{\frac{1}{2}}+\left(\iint\limits_{\Omega} |u_y|^2~dxdy\right)^{\frac{1}{2}}\left(\iint\limits_{\Omega} |v_x|^2~dxdy\right)^{\frac{1}{2}}\\
\leq\left(\iint\limits_{\Omega} |\varphi'|^2~dxdy\right)^{\frac{1}{2}}\left(\iint\limits_{\Omega} |\varphi'|^2~dxdy\right)^{\frac{1}{2}}+\left(\iint\limits_{\Omega} |\varphi'|^2~dxdy\right)^{\frac{1}{2}}\left(\iint\limits_{\Omega} |\varphi'|^2~dxdy\right)^{\frac{1}{2}}\\
= 2\left(\iint\limits_{\Omega} |\varphi'|^2~dxdy\right) =2\|\varphi\mid L^1_2(\Omega)\|^2.
\nonumber
\end{multline}
\end{proof}

In the next example we construct a  domain of finite measure $\Omega_{\alpha}\subset\mathbb C$ which doesn't allows a Sobolev homeomorphism 
$\varphi: \mathbb D\to\Omega_{\alpha}$ of  class $L^1_p(\mathbb D)$, $p>2$, of the unit disc $\mathbb D\subset \mathbb C$ onto the domain $\Omega_{\alpha}\subset\mathbb C$. 

\begin{exa}
Does not exists a Sobolev homeomorphism $\varphi \in L^1_p(\mathbb D)$ of the unit disc $\mathbb D \subset\mathbb C$ onto the domain
$$
{\Omega}_{\alpha}=\{z=x+iy\in \mathbb C : 1< x<\infty,0< y< 1/x^{\alpha},\,\,\alpha>1\}.
$$
for any $p>2$.
\end{exa}

\begin{proof}
Let us remark that $m_2(\Omega_{\alpha})<\infty$, because $\alpha>1$ and 
$$
m_2(\Omega_{\alpha})=\int\limits_{\Omega_{\alpha}}dxdy=\int\limits_1^{\infty}\frac{dx}{x^{\alpha}}=\frac{1}{\alpha-1}.
$$

Consider the function $f(x,y)=\frac{1}{\sqrt{2}}(x+y)$, defined on the domain ${\Omega}_{\alpha}$. Then $|\nabla f(x,y)|=1$ for all $z=x+iy\in\Omega$ and 
$\|\nabla f\mid L_{\infty}({\Omega}_{\alpha})\|=1$. From the other side, $f(x,y)\rightarrow \infty$, as $x\rightarrow\infty$. Hence for the function $f(x,y)$ the inequality (\ref{PoIn})  doesn't holds and by Theorem \ref{thm:Poincare}  for any $p>2$ a homeomorphism $\varphi: \mathbb D\to\Omega$  of the class $L^1_p(\mathbb D)$ can not exists.
\end{proof}

\section{Domains with finite geodesic diameter}

We define the geodesic diameter $\diam_G(\Omega)$ of a domain $\Omega\subset\mathbb{R}^n$ as
$$
\diam_G(\Omega)=\sup\limits_{x,y\in\Omega}\dist_{\Omega}(x,y).
$$
Here $\dist_{\Omega}(x,y)$ is the intrinsic geodesic distance:
$$
\dist_{\Omega}(x,y)=\inf_{\gamma\in\Omega}\int\limits_0^1 |\gamma'(t)|~dt
$$
where infimum is taken over all rectifiable curves $\gamma\in\Omega$ such that $\gamma(0)=x$ and $\gamma(1)=y$.

The main result of this work is the following:

\begin{thm}\label{thm:FinDiam} 
Let $\Omega$ and $\Omega'$ be domains in $\mathbb{R}^n$ and $\Omega$ supports $p$-Poincar\'e inequality for some $p>n$.  Suppose that there exists a Sobolev homeomorphism $\varphi : \Omega\to \Omega'$ of the class $L^1_p(\Omega)$. Then the domain $\Omega'$ has a finite geodesic diameter.
\end{thm}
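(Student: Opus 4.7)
The plan is to apply Theorem~\ref{thm:Poincare} to the geodesic distance function from a fixed base point in $\Omega'$ and read off the diameter bound directly from the Poincar\'e conclusion.

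Fix a point $x_0 \in \Omega'$ and set $f(x) = \dist_{\Omega'}(x, x_0)$. Since $\Omega'$ is open and connected, any two points in $\Omega'$ can be joined by a polygonal path lying in $\Omega'$, so $f(x) < \infty$ for every $x \in \Omega'$, and $f(x_0) = 0$. For $x, y$ in a common ball $B \subset \Omega'$, the straight segment between them is a rectifiable curve contained in $\Omega'$, so $\dist_{\Omega'}(x,y) \le |x-y|$ and therefore
\[
|f(x) - f(y)| \le \dist_{\Omega'}(x,y) \le |x-y|.
\]
Hence $f$ is locally $1$-Lipschitz on $\Omega'$ with respect to the Euclidean metric, so $f \in C(\Omega')$, $f$ is weakly differentiable, and $|\nabla f| \le 1$ almost everywhere in $\Omega'$. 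In particular $f \in L^1_{\infty}(\Omega')$ with $\|f \mid L^1_{\infty}(\Omega')\| \le 1$.

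Next I would apply Theorem~\ref{thm:Poincare} to $f$: there exists $c \in \mathbb{R}$ with
\[
\sup_{x \in \Omega'} |f(x) - c| \le K \|f \mid L^1_{\infty}(\Omega')\| \le K.
\]
Since $f \ge 0$ and $f(x_0) = 0$, evaluating at $x = x_0$ gives $|c| \le K$, whence $0 \le f(x) \le 2K$ for every $x \in \Omega'$. That is, $\dist_{\Omega'}(x, x_0) \le 2K$ for all $x \in \Omega'$, so by the triangle inequality for the intrinsic metric
\[
\diam_G(\Omega') \le 4K < \infty.
\]

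The main obstacle, such as it is, is verifying that the geodesic distance function genuinely lies in $L^1_{\infty}(\Omega')$ with essential supremum of $|\nabla f|$ bounded by $1$; this rests on the standard fact that a locally Lipschitz function on an open subset of $\mathbb{R}^n$ is weakly differentiable with $|\nabla f|$ bounded almost everywhere by the local Lipschitz constant. Everything else is a direct application of Theorem~\ref{thm:Poincare}.
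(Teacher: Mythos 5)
Your proposal is correct and follows essentially the same route as the paper: apply Theorem~\ref{thm:Poincare} to the intrinsic distance function $f(x)=\dist_{\Omega'}(x_0,x)$, which lies in $L^1_{\infty}(\Omega')$ since it is locally $1$-Lipschitz, and convert the resulting oscillation bound into a bound on $\diam_G(\Omega')$. Your version is in fact slightly tidier than the paper's: you use $f(x_0)=0$ to control the constant $c$ explicitly (the paper just carries $|c_0|$ along), and you correctly claim only $|\nabla f|\le 1$ a.e.\ rather than the paper's unneeded assertion that $|\nabla f|=1$ a.e.
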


\begin{proof}
Let $x_0$ be an arbitrary fixed point in the domain $\Omega'$. We consider the function $f(x)=\dist_{\Omega'}(x_0,x)$, $x\in\Omega'$. By definition the function $f$ is a Lipschitz function and by the Rademacher Theorem $f$ is differentiable almost everywhere.

Moreover $|\nabla f|=1$ almost everywhere on $\Omega'$ and because $f$ is a Lipschitz function it is a weakly differentiable and it belongs to the Sobolev space $L^1_{\infty}(\Omega')$. Hence, by Theorem \ref{thm:Poincare} the function $f$ belongs to the space $C(\Omega')$ and 
$$
\inf_{c\in \mathbb R} \|f-c\mid C(\Omega')\|\leq K \|f\mid L^1_{\infty}(\Omega')\|.
$$
Here a constant $K$ does not depends on $f$.
It means that there exists a number $c_0$ such that 
$$
\|f-c_0\mid C(\Omega')\|\leq 2K \|f\mid L^1_{\infty}(\Omega')\|
$$

Therefore we have the following estimate:
$$
\sup\limits_{x\in \Omega'}\dist_{\Omega'}(x_0,x)=\|f\mid C(\Omega')\|\leq 2K \|f\mid L^1_{\infty}(\Omega')\|+|c_0|.
$$
Because 
$$ 
\diam_G(\Omega)\leq 2 \sup\limits_{x\in \Omega'}\dist_{\Omega'}(x_0,x)
$$
we have finally
$$
\diam_G(\Omega)\leq 4 \{ K \|f\mid L^1_{\infty}(\Omega')\|+|c_0| \}.
$$
\end{proof}

\begin{exa} We reproduce here the classical example of a bounded plane domain  with infinite geodesic diameter.

Consider the ring domain $\Omega_R=\{z=x+iy\in \mathbb C : 1<x^2+y^2<4 \}$. Fix a number $0<r<1/4$ and for natural number $n$, $n=1,2,...$, consider circles:
$$
S^r_{2n}=\biggl\{z=x+iy\in \mathbb C : x^2+y^2=\bigl(1+\frac{1}{2n}\bigr)^2, \quad x<1-r \biggr\}
$$
and
$$
S^r_{2n+1}=\biggl\{z=x+iy\in \mathbb C : x^2+y^2=\bigl(1+\frac{1}{2n+1}\bigr)^2, \quad x>r-1 \biggr\}.
$$

Let 
$$
\Omega=\Omega_{R}\setminus (\cup_{n=1}^{\infty} S^r_{2n})\cup (\cup_{n=1}^{\infty} S^r_{2n+1}).
$$
Then  for every $\varepsilon>0$ does not exist a conformal homeomorphism $\varphi$ of unit disc $\mathbb{D}$ onto domain $\Omega$ of the class $L^1_{2+\varepsilon}(B)$. 
\end{exa}

\begin{proof} By the construction this domain $\Omega_R$ has an infinite geodesic diameter and finite measure. Then by the previous theorem does not exists a Sobolev homeomorphism $ \varphi: \mathbb{D} \to \Omega $ of the class $L^1_{2+\varepsilon}(\mathbb{D})$. 
By the Riemann Mapping Theorem there exist a conformal homeomorphism $\varphi: \mathbb{D} \to \Omega $. Because $\Omega$ is bounded domain this homeomorphism belongs to the class $L^1_{2}(\mathbb{D})$ and do not belongs to the class $L^1_{2+\varepsilon}(\mathbb{D})$ for any $\varepsilon>0$. 
\end{proof}

As a direct consequence of Theorem \ref{thm:FinDiam} we immediately obtain  the following assertion about global integrability of  conformal homeomorphisms $\varphi: \mathbb{D} \to \Omega $ in degree $p>2$ .

\begin{thm}
For any $\varepsilon>0$ does not exist a conformal homeomorphism $\varphi\in L^1_{2+\varepsilon}(\mathbb D)$ of the unit disc $\mathbb D\subset\mathbb C$ onto a plane domain $\Omega\subset\mathbb C$ with infinite geodesic diameter.
\end{thm}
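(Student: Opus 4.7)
The plan is to obtain this statement as an immediate corollary of Theorem~\ref{thm:FinDiam}, which is the main result of the paper. I would argue by contraposition: assume that a conformal homeomorphism $\varphi : \mathbb{D} \to \Omega$ with $\varphi \in L^1_{2+\varepsilon}(\mathbb{D})$ exists, and deduce that $\Omega$ must have finite geodesic diameter.

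The first step is to verify that the hypotheses of Theorem~\ref{thm:FinDiam} are satisfied with $n = 2$ and $p = 2+\varepsilon > n$. The unit disc $\mathbb{D} \subset \mathbb{R}^2$ is a bounded domain with smooth boundary, so as the paper remarks just after Theorem~\ref{thm:Poincare}, it supports the $p$-Poincar\'e inequality for every $p > 2$; in particular for $p = 2+\varepsilon$. Thus the domain side of the hypothesis is met.

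Next, a conformal homeomorphism $\varphi : \mathbb{D} \to \Omega$ is by definition a diffeomorphism, hence smooth and weakly differentiable on $\mathbb{D}$. The assumption $\varphi \in L^1_{2+\varepsilon}(\mathbb{D})$ means that its coordinate functions lie in $L^1_{2+\varepsilon}(\mathbb{D})$, so $\varphi$ qualifies as a Sobolev homeomorphism of class $L^1_p(\mathbb{D})$ with $p = 2+\varepsilon$ in the sense required by Theorem~\ref{thm:FinDiam}.

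Applying Theorem~\ref{thm:FinDiam} with these choices yields $\diam_G(\Omega) < \infty$, contradicting the assumption that $\Omega$ has infinite geodesic diameter. Therefore no such conformal homeomorphism can exist. There is essentially no obstacle in the proof: the previous sections have been arranged precisely so that the conformal setting is a direct instance of the general Sobolev statement, and the only ``verification'' required is that $\mathbb{D}$ itself supports the relevant Poincar\'e inequality, which is standard.
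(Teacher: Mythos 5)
Your proposal is correct and matches the paper's own treatment: the paper states this theorem as a direct consequence of Theorem~\ref{thm:FinDiam}, and your argument simply makes explicit the two verifications involved (that $\mathbb{D}$, being a bounded domain with smooth boundary, supports the $p$-Poincar\'e inequality for $p=2+\varepsilon>2$, and that the conformal homeomorphism is a Sobolev homeomorphism of class $L^1_{2+\varepsilon}(\mathbb{D})$). Nothing further is needed.
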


\begin{rem} 
Remind that by Riemann Mapping Theorem for any simply connected plane domain $\Omega$ there exists a conformal homeomorphism $\varphi : \mathbb D\to \Omega$ but this conformal homeomorphism can not belong to the class $L^1_{2+\varepsilon}(B)$ for any $\varepsilon>0$ if $\Omega$ has infinite geodesic diameter. 
\end{rem}

The similar results for quasiconformal mappings follows from our main theorem: 
\begin{thm}
For any $\varepsilon>0$ does not exist a quasiconformal homeomorphism  $\varphi\in L^1_{n+\varepsilon}(\mathbb B)$ of the unit ball $\mathbb B\subset\mathbb R^n$ onto a domain $\Omega\subset\mathbb R^n$ with infinite geodesic diameter.
\end{thm}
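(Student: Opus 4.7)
My plan is to derive this as an essentially immediate corollary of Theorem~\ref{thm:FinDiam}. The strategy is to verify that the unit ball $\mathbb{B}$ meets the hypotheses of that theorem for the exponent $p=n+\varepsilon$, and then argue by contradiction.

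First I would observe that $\mathbb{B}\subset\mathbb{R}^n$ is a bounded domain with smooth boundary, and the sentence just before Theorem~\ref{thm:Poincare} records that every such domain supports the $p$-Poincar\'e inequality for all $p>n$. Hence with $p=n+\varepsilon$ the source-domain hypothesis of Theorem~\ref{thm:FinDiam} is automatically satisfied. Note that the actual role of quasiconformality here is invisible in the argument: the statement already supplies the Sobolev regularity $\varphi\in L^1_{n+\varepsilon}(\mathbb B)$ by hypothesis, so we do not need to invoke any regularity theorem for quasiconformal maps.

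Next I would suppose, for contradiction, that a quasiconformal homeomorphism $\varphi:\mathbb{B}\to\Omega$ with $\Omega$ of infinite geodesic diameter does belong to $L^1_{n+\varepsilon}(\mathbb{B})$. Then $\varphi$ is a Sobolev homeomorphism of the class $L^1_p(\mathbb{B})$ for $p=n+\varepsilon>n$, so Theorem~\ref{thm:FinDiam} applies and forces $\Omega=\varphi(\mathbb{B})$ to have \emph{finite} geodesic diameter, contradicting the hypothesis on $\Omega$.

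I do not foresee any real obstacle: all the work is concentrated in Theorem~\ref{thm:FinDiam}, and the present statement is just the specialization of that theorem to the model source domain $\mathbb{B}$, phrased in the contrapositive. If anything, the one point worth stating carefully is that here we identify the Sobolev homeomorphism condition in Theorem~\ref{thm:FinDiam} with the integrability assumption $\varphi\in L^1_{n+\varepsilon}(\mathbb{B})$, i.e.\ that each coordinate of $\varphi$ lies in the Sobolev class $L^1_{n+\varepsilon}(\mathbb B)$; this is already built into the definition of $L^1_p$ for mappings given in Section~2.
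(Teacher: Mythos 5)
Your argument is correct and coincides with the paper's intended proof: the theorem is stated there as a direct consequence of Theorem~\ref{thm:FinDiam}, applied with $p=n+\varepsilon>n$ to the unit ball, which supports the $p$-Poincar\'e inequality as a bounded smooth domain. Your observation that the quasiconformality plays no role beyond the assumed Sobolev regularity is also consistent with how the paper uses the hypothesis.
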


\noindent
Vladimir Gol'dshtein  \,  \hskip 3.2cm Alexander Ukhlov

\noindent
Department of Mathematics   \hskip 2.25cm Department of Mathematics

\noindent
Ben-Gurion University of the Negev  \hskip 1.05cm Ben-Gurion University of the Negev

\noindent
P.O.Box 653, Beer Sheva, 84105, Israel  \hskip 0.7cm P.O.Box 653, Beer Sheva, 84105, Israel

\noindent
E-mail: vladimir@bgu.ac.il  \hskip 2.5cm E-mail: ukhlov@math.bgu.ac.il

\end{document}